\newtheorem{thm}{Theorem}[section]
\newtheorem{cor}[thm]{Corollary}
\newtheorem{prop}[thm]{Proposition}
\newtheorem{Question}[thm]{Question}
\numberwithin{equation}{section}
\newcommand{\seq}[1]{\langle #1\rangle}
\title{Self-complementary distance-regular Cayley graphs over abelian groups}
\author{Mojtaba Jazaeri}
\address{Department of Mathematics, Shahid Chamran University of Ahvaz, Ahvaz, Iran}
\email{M.Jazaeri@scu.ac.ir, M.Jazaeri@ipm.ir}
\begin{document}

\subjclass[2020]{05E16 \and 05E30}

\keywords{Self-complementary graph; Distance-regular graph; Strongly regular graph; Cayley graph; Abelian group; Schur ring}

\begin{abstract}
In this paper, we study self-complementary distance-regular Cayley graphs over abelian groups. We prove that if a regular graph is self-complementary distance-regular, then it is self-complementary strongly regular. We also deal with self-complementary strongly regular Cayley graphs over abelian groups and give an example of a self-complementary strongly regular Cayley graph over a non-elementary abelian group.
\end{abstract}

\maketitle


\section{Introduction}
A graph is called self-complementary if it is isomorphic to its complement. The structure of a self-complementary graph is beautiful and it has also application in finding lower bound in Ramsey numbers. It is well known that a self-complementary graph has diameter at most three. The classification of self-complementary regular graphs seems to be complicated and therefore the researchers have focused on some special families such as strongly regular graphs, Cayley graphs, vertex-transitive graphs, and symmetric graphs. For some works on this topic, we refer to \cite{WLLW} and its references. It turns out that the diameter of a self-complementary regular graph is two (see Proposition \ref{Self-complementary regular} below). Furthermore, a regular graph with diameter two has at least three distinct eigenvalues (with respect to the adjacency matrix) and therefore, a first step could be concentrated on self-complementary regular graphs of diameter two with three distinct eigenvalues. These graphs are indeed self-complementary strongly regular graphs. In general, the classification of self-complementary strongly regular graphs seems to be hard and there is no outstanding achievement in literature related to this family. Therefore it is better to restrict this family. In this paper, we concentrate on self-complementary strongly-regular Cayley graphs over abelian groups. In general, the study of strongly regular Cayley graphs are the same as partial difference sets (see \cite{MA}) and self-complementary strongly regular Cayley graphs come from Paley type partial difference sets. Arasu, Jungnickel, Ma and Pott stated the following questions in \cite{AJMP}.
\begin{Question} \label{Question1}
Let $G$ be an abelian group of order $4t+1$. If $4t+1$ is not a prime power, does there exist a Paley type partial difference set in the group $G$? If $4t+1$ is a prime power, does the group $G$ need to be elementary abelian?
\end{Question}
Davis in Corollary $3.1$ of \cite{Davis} answered the second part of this question by proving that there exists a Paley type partial difference set in the abelian group $\mathbb{Z}_{p^{2}} \times \mathbb{Z}_{p^{2}}$. Furthermore, Leung and Ma  in \cite{LM} generalized this by the construction of Paley type partial difference set in abelian $p$-groups with any given exponent. Moreover, Polhill  in \cite{Polhill} proved that there exist Paley type partial difference sets in the group $\mathbb{Z}_{3}^{2} \times \mathbb{Z}_{p}^{4t}$, where $t$ is a natural number and $p$ is an odd prime number that answered the first part of this question. Finally, the order of abelian groups which admit Paley type partial difference sets has been discovered in \cite{Wang}. It is proven that if an abelian group admits a Paley type partial difference set and its order is not a prime power, then its order is $n^{4}$ or $9n^{4}$, where $n>1$ is an odd integer, but, however, Paley type partial difference sets which come from self-complementary strongly regular Cayley graphs over abelian groups could be restricted than this. Therefore we state a similar question as follows.
\begin{Question} \label{Question2}
Let $G$ be an abelian group of order $4t+1$. If $4t+1$ is not a prime power, does there exist a self-complementary strongly regular graph in the group $G$? If $4t+1$ is a prime power, does the group $G$ need to be elementary abelian?
\end{Question}
In this paper, we deal with self-complementary strongly regular Cayley graphs over abelian groups, and partially answer the second part of this question by giving an example of a self-complementary strongly regular Cayley graph over the abelian group $\mathbb{Z}_{9} \times \mathbb{Z}_{9}$.
\section{Preliminaries}
In this paper, all graphs are undirected and simple, i.e., there are no loops or multiple edges. Moreover, we consider the eigenvalues of the adjacency matrix of a graph. A connected graph $\Gamma$ is called distance-regular with diameter $d$ and intersection array
\begin{equation*}
\{b_{0},b_{1},\ldots,b_{d-1};c_{1},c_{2},\ldots,c_{d}\}
\end{equation*}
whenever for each pair of vertices $x$ and $y$ at distance $i$, where $0 \leq i \leq d$, the number of neighbors of $x$ at distance $i+1$ and $i-1$ from $y$ are constant numbers $b_{i}$ and $c_{i}$, respectively. This implies that a distance-regular graph is regular with valency $b_{0}=k$ and the number of neighbors of $x$ at distance $i$ from $y$ is the constant number $k-b_{i}-c_{i}$ which is denoted by $a_{i}$. A distance-regular graph with diameter $d=2$ is a strongly regular graph. It is well known that a strongly regular graph with $n$ vertices and degree $k$ has parameters $(n,k,\lambda,\mu)$ such that two adjacent vertices have $\lambda$ common neighbors and two non-adjacent vertices has $\mu$ common neighbors. There exist two other important parameters for a strongly regular graph as follows.
\begin{equation*}
  \beta=\lambda - \mu, \Delta=(\lambda - \mu)^{2}+4(k-\mu).
\end{equation*}
Recall that a strongly regular graph with parameters $(n,k,\lambda,\mu)$ has three distinct eigenvalues $k$, $\frac{1}{2}(\beta \pm \sqrt{\Delta})$.

Let $G$ be a finite group and $S$ be an inverse-closed subset of $G$ not containing the identity element; we call $S$ the connection set. Then the Cayley graph $Cay(G,S)$ is the graph whose vertex set is $G$, where two vertices $a$ and $b$ are adjacent whenever $ab^{-1} \in S$. We note that the Cayley graph $Cay(G,S)$ is a regular graph of degree $|S|$ and it is connected if and only if the subgroup generated by the connection set $S$ is equal to $G$. The complement of a graph $\Gamma$ is denoted by $\Gamma^{c}$ and the identity element of a group by $e$. We also note that the complement of the Cayley graph $\Gamma=Cay(G,S)$ is $\Gamma^{c}=Cay(G,G \setminus (S \cup \{e\}))$.

A self-complementary graph is a graph that is isomorphic to its complement. It follows that if $\Gamma$ is a $k$-regular self-complementary graph with $n$ vertices, then $k=n-k-1$ and therefore $n=2k+1$. Moreover, $k$ must be an even number since the number of vertices is odd. This implies that the number of vertices of a $k$-regular self-complementary graph must be congruent to $1$ modulo $4$. Furthermore, it is well known that a self-complementary graph has diameter $d=2$ or $d=3$ because if a connected graph has diameter $d \geq 3$, then its complement has diameter $d \leq 3$.
\begin{prop} \label{Self-complementary regular}
The diameter of a self-complementary regular graph is two.
\end{prop}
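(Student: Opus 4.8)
The plan is to lean on two facts already recorded in the excerpt: a $k$-regular self-complementary graph $\Gamma$ has $n=2k+1$ vertices, and a self-complementary graph has diameter $2$ or $3$. So it suffices to rule out diameter $3$, and for that I would use a short vertex-counting argument. First I would note that $\Gamma$ is connected (a graph and its complement cannot both be disconnected, and $\Gamma\cong\Gamma^{c}$), so its diameter is well defined and equals $2$ or $3$.

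Next I would argue by contradiction: suppose the diameter of $\Gamma$ equals $3$ and pick vertices $u,v$ with $d_{\Gamma}(u,v)=3$. Since $u$ and $v$ are non-adjacent, $v\notin N_{\Gamma}(u)$ and $u\notin N_{\Gamma}(v)$; since $\Gamma$ is simple, $u\notin N_{\Gamma}(u)$ and $v\notin N_{\Gamma}(v)$; and since a common neighbour of $u$ and $v$ would force $d_{\Gamma}(u,v)=2$, we get $N_{\Gamma}(u)\cap N_{\Gamma}(v)=\emptyset$. Hence $\{u\}$, $\{v\}$, $N_{\Gamma}(u)$, $N_{\Gamma}(v)$ are pairwise disjoint subsets of $V(\Gamma)$, and regularity gives $n\ge 2+k+k=2k+2$, contradicting $n=2k+1$. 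Therefore the diameter is not $3$; it is also not $\le 1$ (a complete graph is not isomorphic to its complement), so it is exactly $2$.

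The argument is essentially this one observation, so there is no serious obstacle; the only point demanding a little care is the pairwise disjointness of the four sets, which is precisely where the hypotheses $d_{\Gamma}(u,v)=3$ (not merely $\ge 2$) and the simplicity of $\Gamma$ enter. Everything else is a direct count using $n=2k+1$.
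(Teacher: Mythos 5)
Your proof is correct and rests on the same neighborhood-counting idea as the paper's: two disjoint neighborhoods of size $k$ together with the two chosen vertices cannot fit inside $n=2k+1$ vertices. The only cosmetic difference is that you run the count directly in $\Gamma$ as a contradiction to diameter $3$, whereas the paper runs the complementary count in $\Gamma^{c}$ (showing that adjacent vertices of $\Gamma$ always have a common non-neighbor) and then transfers the conclusion back to $\Gamma$ via the isomorphism $\Gamma\cong\Gamma^{c}$.
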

\begin{proof}
Let $\Gamma$ be a self-complementary regular graph. Then it has $4t+1$ vertices and degree $2t$ for some natural numbers $t$. We calculate the diameter of $\Gamma^{c}$. Let $v$ and $w$ be two adjacent vertices of the graph $\Gamma$. If these two vertices have no common neighbor in the graph $\Gamma$, then it is easy to see that there is a unique vertex $u$ such that the vertices $v$ and $w$ are not adjacent to $u$ in the graph $\Gamma$ because this regular graph has degree $2t$ with $4t+1$ vertices. Therefore the distance between $v$ and $w$ is two in $\Gamma^{c}$. If these two vertices have at least one common neighbor in the graph $\Gamma$, then a similar argument works and there are more than one vertex such that the vertices $v$ and $w$ are not adjacent to these vertices in the graph $\Gamma$. Therefore the diameter of the graph $\Gamma^{c}$ is two. This implies that the diameter of the graph $\Gamma$ is two because it is isomorphic to its complement and this completes the proof.
\end{proof}
\begin{cor}
  Let $\Gamma$ be a self-complementary distance-regular graph. Then $\Gamma$ is a self-complementary strongly regular graph.
\end{cor}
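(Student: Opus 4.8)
The plan is to obtain the statement as an immediate consequence of Proposition \ref{Self-complementary regular} together with the definitions recalled in this section. The crucial observation is that a distance-regular graph is, by definition, connected and regular, of valency $b_{0}=k$. Hence a self-complementary distance-regular graph $\Gamma$ is in particular a self-complementary regular graph, so Proposition \ref{Self-complementary regular} applies and yields that $\Gamma$ has diameter $d=2$.

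Next I would invoke the fact, also recalled above, that a distance-regular graph of diameter $2$ is exactly a strongly regular graph: writing $n$ for the number of vertices, $k=b_{0}$ for the valency, $\lambda=a_{1}$ and $\mu=c_{2}$, the defining intersection conditions for distance-regularity with $d=2$ are precisely the requirements that every pair of adjacent vertices has $\lambda$ common neighbours and every pair of non-adjacent vertices has $\mu$ common neighbours. Thus $\Gamma$ has parameters $(n,k,\lambda,\mu)$ and is strongly regular; since it was assumed self-complementary, it is a self-complementary strongly regular graph, which is the assertion.

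There is essentially no hard step here, the argument being a two-line chain of implications; the only point deserving a word of care is the degenerate range of small diameters. One should note that the situation $d\le 1$ does not arise: a graph of diameter at most $1$ is a (possibly one-vertex) complete graph, which is self-complementary only in the trivial case $n=1$, and in any event Proposition \ref{Self-complementary regular} already forces the diameter to equal $2$. So the corollary follows at once.
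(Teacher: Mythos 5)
Your argument is exactly the one the paper intends: the corollary is stated immediately after Proposition \ref{Self-complementary regular}, and the deduction is precisely that a self-complementary distance-regular graph is a self-complementary regular graph, hence has diameter two, and a distance-regular graph of diameter two is by definition strongly regular. The proposal is correct and matches the paper's (implicit) proof, with the remark about the degenerate diameters being a harmless extra.
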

Let $\Gamma$ be a self-complementary strongly-regular graph with parameters $(n=4t+1,k=2t,\lambda,\mu)$. Recall that there is a relation among the parameters of a strongly regular graph that is $(n-k-1)\mu=k(k-\lambda-1)$. It follows that $2t\mu=2t(2t-\lambda-1)$ and therefore $\mu+\lambda=2t-1$ in the graph $\Gamma$. On the other hand, the complement of a strongly regular graph with parameters $(n,k,\lambda,\mu)$ is a strongly regular graph with parameters $(n,n-k-1,n-2-2k+\mu,n-2k+\lambda)$. It follows that $\lambda-\mu=-1$ because the parameters of the self-complementary strongly regular graph $\Gamma$ and its complement are the same. This implies that $\lambda=t-1$ and $\mu=t$. Such a strongly regular graph is called a conference graph, i.e., the strongly regular graph with parameters $(4t+1,2t,t-1,t)$, and such parameters are called Payley type parameters. It follows that a self-complementary strongly regular graph is a conference graph. Furthermore, its distinct eigenvalues are $\{2t,\frac{-1\pm\sqrt{4t+1}}{2}\}$, $\beta=-1$, and $\Delta=n=4t+1$.

There exist two infinite families of self-complementary strongly regular Cayley graphs; Payley graphs and Peisert graphs.
Let $\mathbb{F}$ be a finite field of order $q=p^{r}$, where $p$ is a prime number and $q$ congruent to $1$ modulo $4$. Then the Paley graph $P_{q}$ is a Cayley graph $Cay(G,S)$ over the elementary abelian group $G$ with the connection set $S=\{x^{2} \mid x \in \mathbb{F}, x \neq 0\}$. If the multiplicative generator of the finite field $\mathbb{F}$ is denoted by $a$, where $p$ is a prime number congruent to $3$ modulo $4$ and $r$ is even, then the Peisert graph $P^{*}_{q}$ is a Cayley graph $Cay(G,S)$ over the elementary abelian group $G$ with the connection set $S=\{a^{i} \mid i \equiv 0,1 \mod 4\}$.
\subsection{The lexicographic product and self-complementary Cayley graphs}
Let $\Gamma_{1}$ and $\Gamma_{2}$ be two graphs with vertex set $V(\Gamma_{1})$ and $V(\Gamma_{1})$, respectively. Then the lexicographic product of the graph $\Gamma_{1}$ with $\Gamma_{2}$ which is denoted by $\Gamma_{1}[\Gamma_{2}]$, is a graph with vertex set $V(\Gamma_{1}) \times V(\Gamma_{2})$ such that two vertices $(a,b)$ and $(c,d)$, where $a,c \in V(\Gamma_{1})$ and $b,d \in V(\Gamma_{2})$, are adjacent whenever $a$ is adjacent to $c$ in $\Gamma_{1}$ or $a=c$ and $b$ is adjacent to $d$ in $\Gamma_{2}$. It is well known that the lexicographic product of a self-complementary graph $\Gamma_{1}$ with another self-complementary graph $\Gamma_{2}$ is a self-complementary graph. On the other hand, it is trivial to see that the lexicographic product of a Cayley graph $Cay(G_{1},S_{1})$ with another Cayley graph $Cay(G_{2},S_{2})$ is a Cayley graph $Cay(G_{1} \times G_{2},S)$, where $S=\{(a,g) \mid a \in S_{1}, g \in G_{2}\} \cup \{(e,s) \mid s \in S_{2}\}$. This implies that the lexicographic product of a self-complementary Cayley graph with another self-complementary Cayley graph is a self-complementary Cayley graph.
\subsection{Strongly regular Cayley graphs and Schur rings}
Let $G$ be a finite group of order $n$ and $D$ its subset of order $k$. Then the subset $D$ is called a $(n,k,\lambda,\mu)$-partial difference set in the group $G$ whenever every nonidentity element $g \in G$ can be expressible $\lambda$ and $\mu$ times as $d_{1}d_{2}^{-1}$ with the elements $d_{1}, d_{2} \in D$ depending on whether $g \in D$ or not. For more background, we refer to the excellent survey of partial difference sets by Ma \cite{MA}.

Let $G$ be a group and $R$ a commutative ring with identity. Then the group algebra $RG$ consists of the elements of form $\sum_{g \in G}a_{g}g$, where $a_{g} \in R$, with the following operations.
\begin{equation*}
\sum_{g \in G}a_{g}g+\sum_{g \in G}b_{g}g=\sum_{g \in G}(a_{g}+b_{g})g,
\end{equation*}
\begin{equation*}
(\sum_{g \in G}a_{g}g)(\sum_{h \in G}b_{h}h)=\sum_{g,h \in G}(a_{g}b_{h})gh,
\end{equation*}
and the scalar multiplication,
\begin{equation*}
c\sum_{g \in G}a_{g}g=\sum_{g \in G}(ca_{g})g.
\end{equation*}
Let $T$ be a subset of the group $G$. Then the element $\sum_{t \in T}t$, in this algebra, is denoted by $\overline{T}$.

Let $Cay(G,S)$ be a strongly regular Cayley graph with the parameters $(n,k,\lambda,\mu)$. Then the connection set $S$ is a partial difference set in the group $G$ and the following equation holds (cf. \cite[Proposition~1.1 and Theorem~1.3]{MA}).
\begin{equation} \label{Equation SRG}
  \overline{S}^{2}=\mu \overline{G}+(\lambda - \mu)\overline{S}+(k-\mu)e.
\end{equation}
Let $\{T_{0},T_{1},\ldots,T_{d}\}$, where $T_{0}=\{e\}$, be the partition of the group $G$ with disjoint inverse-closed subsets. Then the subalgebra $\mathcal{S}$ generated by $\alpha=\{\overline{T_{0}},\overline{T_{1}},\ldots,\overline{T_{d}}\}$ is called symmetric Schur ring over the ring $R$ whenever the $R$-module $\mathcal{S}$ is generated by $\alpha$, and in this case, $\alpha$ is called the simple basis of the Schur ring $\mathcal{S}$. We don't aim to deal with Schur rings in the general case in this paper.

Let $Cay(G,S)$ be a strongly regular Cayley graph with the parameters $(4t+1,2t,t-1,t)$ over the abelian group $G$ of order $4t+1$ for some natural numbers $t$. Then its complement is also a strongly regular Cayley graph with parameters $(4t+1,2t,t-1,t)$. It follows that the subalgebra $\mathcal{S}$ in the group algebra $\mathbb{Z}G$, where $\mathbb{Z}$ is the ring of integer numbers, generated by $\alpha=\{e,\overline{S},\overline{G \setminus (S \cup \{e\})}\}$ is indeed a Schur ring. To see this it is sufficient to consider the following equations (see Equation \ref{Equation SRG}).
\begin{equation*}
  \overline{S}^{2}=t \overline{G}-\overline{S}+te,
\end{equation*}
\begin{equation*}
 \overline{G \setminus (S \cup \{e\})}^{2}=t \overline{G}-\overline{G \setminus (S \cup \{e\})}+te.
\end{equation*}
Moreover,
\begin{equation*}
  \overline{G \setminus \{e\}}^{2}=(\overline{S}+ \overline{G \setminus (S \cup \{e\})})^{2}=\overline{S}^{2}+\overline{G \setminus (S \cup \{e\})}^{2}+2(\overline{S})(\overline{G \setminus (S \cup \{e\})}).
\end{equation*}
On the other hand,
\begin{equation*}
  \overline{G \setminus \{e\}}^{2}=(|G|-1)e+(|G|-2)(\overline{G \setminus \{e\}})=4t(e)+(4t-1)(\overline{G \setminus \{e\}}).
\end{equation*}
This implies that
\begin{equation}\label{Product coefficient}
 (\overline{S})(\overline{G \setminus (S \cup \{e\})})=t(\overline{G \setminus \{e\}})=t(\overline{S}+ \overline{G \setminus (S \cup \{e\})}).
\end{equation}
This implies that if $Cay(G,S)$ is a strongly regular Cayley graph with the parameters $(4t+1,2t,t-1,t)$, then every non-identity element $g$ in the group $G$ can be expressible $t$ times as the product of two distinct elements $x$ and $y$ in the connection set $S$ and $G \setminus S \cup \{e\}$, respectively.
\section{Self-complementary strongly regular Cayley graphs}
The Cayley graphs over cyclic groups are the same as circulant graphs and distance-regular circulant graphs have been classified in \cite{MP}. Therefore the Paley graphs on the prime number of vertices are the only self-complementary distance-regular circulant graphs. Let $G$ be the abelian group $G=\mathbb{Z}_{p^{2}} \times \mathbb{Z}_{p^{2}}$, where $p$ is a prime number. Davis \cite[Corollary~3.1]{Davis} constructed a Paley type partial difference set in the abelian group $G$ as follows. Let $G=\seq{(i,j)|i,j=0,1,\ldots,p^{2}-1}$ and $C$ be the set of the elements of order $p^{2}$ in the following subgroups of order $p^{2}$.
\begin{equation*}
  \{\seq{(1,1)},\seq{(1,2)},\ldots,\seq{(1,\frac{p(p-1)}{2})},\seq{(p,1)},\seq{(2p,1)},\ldots,\seq{(\frac{(p-1)p}{2},1)}\},
\end{equation*}
and $D$ be the set of all elements without the identity element in the following subgroups of order $p^{2}$.
\begin{equation*}
\{\seq{(1,0)},\seq{(0,1)},\seq{(1,\frac{p^{2}-p}{2}+1)},\ldots,\seq{(1,\frac{p^{2}+1}{2}-2)}\}.
\end{equation*}
Then $S=C \cup D$ is a Paley type partial difference set in the group $G$ which is inverse-closed. We checked it with GAP \cite{GAP} that the Cayley graph $Cay(G,S)$ is a self-complementary strongly regular graph for $p=3$. This shows that self-complementary strongly regular Cayley graphs over abelian groups are not on the elementary abelian groups and answers the second part of Question \ref{Question2}. But this construction is not self-complementary in general because is not self-complementary for $p=5$. Furthermore, the construction of partial difference sets with Paley parameters by Leung and Ma \cite{LM} does not give rise to self-complementary strongly regular Cayley graphs for the group $G=\mathbb{Z}_{27} \times \mathbb{Z}_{27}$.

We note that a self-complementary symmetric graph is isomorphic to a self-complementary strongly regular Cayley graph over an elementary abelian $p$-group, where $p$ is an odd prime number, and this family of self-complementary graphs has been classified in \cite{Peisert}; these are the Paley graphs, Peisert graphs, and the exceptional graph on $23^{2}$ vertices. As far as we know, there is no self-complementary strongly regular Cayley graph over the elementary abelian $p$-groups other than the mentioned graphs and therefore we conclude this paper with the following question.
\begin{Question}
  Is it true that the Paley graphs, Peisert graphs, and the exceptional graph on $23^{2}$ vertices are the only self-complementary strongly regular Cayley graphs over elementary abelian groups?
\end{Question}
\section*{Acknowledgements}
The author is grateful to the Research Council of Shahid Chamran University of Ahvaz for financial support (SCU.MM1401.29248).

\end{document}